\newtheorem{theorem}{Theorem}[section]
\newtheorem{lemma}[theorem]{Lemma}
\newtheorem{corollary}[theorem]{Corollary}
\theoremstyle{definition}
\theoremstyle{definition}
\theoremstyle{definition}
\newtheorem{definition}[theorem]{Definition}
\newtheorem{question}[theorem]{Question}
\theoremstyle{definition}
\theoremstyle{definition}
\newtheorem{observation}[theorem]{Observation}
\def\Tbad{T_{\text{bad}}}
\def\Tgood{T_{\text{good}}}
\def\TU{T_{U}}
\def\TL{T_{L}}
\def\op{\text{op}}
\def\append{\cdot}
\def\flipmap{\Phi_{P,S}}
\def\flipmapinverse{\Phi_{P',S}}
\title{The poset associahedron $f$-vector is a comparability invariant}
\author{Son Nguyen and Andrew Sack}
\address{School of Mathematics, University of Minnesota, Minneapolis, MN 55455}
\email{{\href{mailto:nguy4309@umn.edu}{nguy4309@umn.edu}}}
\address{Department of Mathematics, University of California, Los Angeles, CA 90095, USA}
\email{{\href{mailto:andrewsack@math.ucla.edu}{andrewsack@math.ucla.edu}}}
\thanks{This material is based upon work supported by the National Science Foundation
Graduate Research Fellowship Program under Grant No. DGE-2034835 and National Science Foundation Grants No. DMS-1954121 and DMS-2046915. Any
opinions, findings, and conclusions or recommendations expressed in this material
are those of the author(s) and do not necessarily reflect the views of the National
Science Foundation.}
\date{\today}
\begin{document}

\keywords{Poset, associahedron, permutohedron, comparability invariant}

\begin{abstract}
We show that the $f$-vector of Galashin's poset associahedron $\mathscr A(P)$ only depends on the comparability graph of $P$.  In particular, this allows us to produce a family of polytopes with the same $f$-vectors as permutohedra, but that are not combinatorially equivalent to permutohedra.
\end{abstract}

\maketitle

\section{Introduction}

Recall that the \emph{comparability graph} of a poset $P$ is a graph $C(P)$ whose vertices are the elements of $P$ and where $i \tand j$ are connected by an edge if $i \tand j$ are comparable. A property of $P$ is said to be \emph{comparability invariant} if it only depends on $C(P)$.  Properties of finite posets known to be comparability invariant include the order polynomial and number of linear extensions~\cite{stanley1986two}, the fixed point property~\cite{dreesen1985comparability}, and the Dushnik–Miller dimension~\cite{trotter1976dimension}.  

For a finite connected poset $P$, Galashin introduced the \emph{poset associahedron} $\mathscr A(P)$~\cite{galashin2021poset}.  Poset associahedra generalize Stasheff's associahedron~\cite{StasheffCyclohedron} to the setting of properadic composition.  That is, instead of having a sequence of operations with one input and one output, one may view a poset as a collection of operations with multiple inputs and multiple outputs ``wired together'' by covering relations.  A vertex of a poset associahedron disambiguates the order of composition.  For more details, see~\cite{laplante2022diagonal, sack2023realization, stoeckl2023koszul}. 

The $f$-polynomial of a $d$-dimensional polytope $Q$ is $$f_Q(z) := \sum\limits_{i = 0}^d f_iz^i$$ where $f_i$ is the number of faces of $Q$ in dimension $i$. We call $(f_0, \dots, f_d)$ the $f$-vector of $Q$. The following is our main result:
\begin{theorem}
\label{thm:main_thm}
The $f$-vector of $\mathscr A(P)$ is a comparability invariant.
\end{theorem}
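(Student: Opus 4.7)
The plan is to use the classical fact (due essentially to Gallai) that two finite posets share a comparability graph if and only if they differ by a sequence of reversals on autonomous subsets. Recall that $S \subseteq P$ is \emph{autonomous} if every $x \in P \setminus S$ either lies above all of $S$, lies below all of $S$, or is incomparable to all of $S$; reversing the partial order on such an $S$ (keeping the rest of $P$ intact) yields a poset $P'$ with $C(P)=C(P')$. Thus, to prove Theorem~\ref{thm:main_thm}, it suffices to construct, for each poset $P$ and each autonomous $S \subseteq P$, a dimension-preserving bijection $\flipmap$ between the faces of $\mathscr{A}(P)$ and those of $\mathscr{A}(P')$.

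I would begin by recalling Galashin's combinatorial model for faces of $\mathscr{A}(P)$ as proper tubings: nested families of tubes, where a tube is a connected, order-convex subposet of $P$ satisfying Galashin's axioms, and where the dimension of a face is determined by the number of tubes in the corresponding tubing. For a tubing $\mathcal{T}$ of $P$, I would classify each tube $T \in \mathcal{T}$ according to how $T$ meets $S$: tubes disjoint from $S$, tubes contained in $S$, and tubes that straddle $S$. Autonomy of $S$ severely restricts the shape a straddling tube can take, and this is presumably reflected in the authors' notations $\Tgood, \Tbad, \TU, \TL$ — a finer partition of tubes into categories that behave differently (upper vs.\ lower, or requiring surgery vs.\ left alone) under the reversal of $S$.

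The flip map $\flipmap$ would then be defined tube by tube: tubes disjoint from $S$ are left unchanged, tubes inside $S$ are reversed in the obvious sense, and straddling tubes are modified by a local surgery producing a valid tube of $P'$. Its inverse $\flipmapinverse$ is the analogous construction starting from $P'$. The main obstacle is verifying that $\flipmap(\mathcal{T})$ is always a valid tubing of $P'$: modified tubes must remain connected and order-convex in the reversed poset, nesting relations among tubes must be preserved, and the total tube count must be unchanged so that dimensions match. The autonomy of $S$ is exactly the local condition that makes these surgeries coherent, since it guarantees that comparabilities between $S$ and its complement behave uniformly before and after reversal; the hardest case is likely the straddling tubes, where one must track carefully how the tube's interior and the surrounding nesting interact with the reversal on $S$.

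Once $\flipmap$ is shown to be a dimension-preserving bijection, Theorem~\ref{thm:main_thm} follows immediately by iterating through the sequence of autonomous reversals relating any two posets with the same comparability graph.
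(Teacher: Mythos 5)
Your overall strategy matches the paper's exactly: invoke the Gallai/Dreesen--Poguntke--Winkler fact that posets with the same comparability graph are connected by reversals of autonomous subsets, then construct a tube-count-preserving bijection $\flipmap$ between proper tubings of $P = Q(a\to S)$ and of $P' = Q(a\to S^{\op})$, classifying tubes by how they meet $S$. So far so good, and your identification of what must be verified (connectivity, convexity, nesting, acyclicity, tube count) is also on target.

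The gap is in the sentence ``The flip map $\flipmap$ would then be defined tube by tube \dots\ straddling tubes are modified by a local surgery.'' This is precisely where the proof has nontrivial content, and a per-tube local surgery does not work. A straddling (``bad'') tube $\tau$ generally cannot be sent to any single well-chosen tube of $P'$ independently of the other tubes: the set $\tau$ itself is typically not convex in $P'$, and the correct replacement for $\tau\cap S$ depends on \emph{all} the other bad tubes. What the paper actually does is treat $\Tbad$ globally. First one proves a Structure Lemma: $\Tbad$ splits as a disjoint union $\TL \sqcup \TU$ of ``lower'' and ``upper'' tubes, and each of $\TL$, $\TU$ is a nested chain. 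One then decomposes $\Tbad$ into a triple $(\mathcal L, \mathcal M, \mathcal U)$, where $\mathcal L$ and $\mathcal U$ are decorated nested sequences in $P-S$ recording $\tau - S$ for each bad $\tau$, and $\mathcal M$ is an ordered sequence of disjoint pieces of $S$ recording the successive increments $(\tau_i - \tau_{i-1})\cap S$ together with the ``unused'' part $\hat M$. The flip map reverses $\mathcal M$ and runs a reconstruction algorithm to reassemble new bad tubes from $(\mathcal L, \overline{\mathcal M}, \mathcal U)$; this reattaches different slices of $S$ to the same $L_i$'s and $U_i$'s. One must then verify that $\mathcal M$ is weakly increasing in $P$ (so $\overline{\mathcal M}$ is weakly increasing in $P'$), which is what guarantees acyclicity of the new tubing, and handle compatibility with the good tubes by an induction via the product decomposition of faces. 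None of this ``global recombination of the $S$-parts'' step is present in your sketch, and it is where the difficulty of the theorem resides.

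A smaller point: tubes contained in $S$ need not be ``reversed'' in any active sense; as subsets they are simply carried over unchanged, since convexity and connectivity in $S$ are equivalent to convexity and connectivity in $S^{\op}$.
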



Theorem \ref{thm:main_thm} may lead one to ask if $C(P) \simeq C(P')$ implies that $\mathscr A(P) \tand \mathscr A(P')$ are necessarily combinatorially equivalent.

\begin{definition}
Let $\mathbf a = (a_1, \dots, a_n) \in \Z^n$ with $a_i \ge 1$ for each $i$.  Define the \emph{complete graded poset} of type $\mathbf{a}$ to be the poset $$P_{\mathbf{a}} := \{x_{11}, \dots, x_{1a_1}, x_{21}, \dots, x_{2a_{2}}, \dots\},$$ where $x_{ij} \prec x_{i'j'}$ if and only if $i < i'$. That is, $P_{\mathbf a}$ is the ordinal sum of antichains.
\end{definition}

Observe that $C(P_{\mathbf{a}})$ is invariant under permutation of $\mathbf{a}$.  This observation, together with Theorem~\ref{thm:main_thm}, yields an immediate corollary.

\begin{corollary}
    For any $\mathbf a$, $f_{\mathscr A(P_{\mathbf a})}(z)$ is invariant under permutation of $\mathbf a$.
\end{corollary}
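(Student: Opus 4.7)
The plan is to invoke Theorem~\ref{thm:main_thm} directly, after first nailing down the observation preceding the corollary. Concretely, I would identify $C(P_{\mathbf a})$ explicitly as a familiar graph whose isomorphism type visibly depends only on the multiset $\{a_1,\dots,a_n\}$, not on the order of the entries of $\mathbf a$.

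First I would unpack the definition: two elements $x_{ij}$ and $x_{i'j'}$ are comparable in $P_{\mathbf a}$ if and only if $i \neq i'$. Hence $C(P_{\mathbf a})$ is the complete multipartite graph $K_{a_1,\dots,a_n}$, with parts given by the $n$ antichain ``layers.'' Now if $\sigma \in S_n$ is any permutation and $\mathbf a' := (a_{\sigma(1)},\dots,a_{\sigma(n)})$, then $C(P_{\mathbf a'}) = K_{a_{\sigma(1)},\dots,a_{\sigma(n)}}$, which is isomorphic to $K_{a_1,\dots,a_n}$ via the obvious bijection that sends the $i$-th part of one to the $\sigma^{-1}(i)$-th part of the other. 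This confirms the observation $C(P_{\mathbf a}) \simeq C(P_{\mathbf a'})$.

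With this in hand, Theorem~\ref{thm:main_thm} applies: since the $f$-vector of $\mathscr A(P)$ depends only on the isomorphism type of $C(P)$, and $C(P_{\mathbf a}) \simeq C(P_{\mathbf a'})$ for every permutation $\sigma$, we conclude $f_{\mathscr A(P_{\mathbf a})}(z) = f_{\mathscr A(P_{\mathbf a'})}(z)$. There is no real obstacle here; the entire content of the corollary is the identification of $C(P_{\mathbf a})$ as a complete multipartite graph, and the rest is an immediate appeal to the main theorem.
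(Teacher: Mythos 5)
Your proof is correct and follows the paper's own approach exactly: identify $C(P_{\mathbf a})$ as the complete multipartite graph $K_{a_1,\dots,a_n}$ (so its isomorphism type depends only on the multiset of entries of $\mathbf a$) and then apply Theorem~\ref{thm:main_thm}. You have simply spelled out the paper's one-line observation in more detail.
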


This class of examples is sufficiently rich to answer our question in the negative.


\begin{theorem}
    \label{thm:combinatorially_equivalent}
    Let $m ,n \ge 2$.  Then
    $\mathscr A(P_{(m, 1, n)})$ is combinatorially equivalent to the permutohedron, but $\mathscr A(P_{(1, m, n)})$ is not.
\end{theorem}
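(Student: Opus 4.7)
The plan is to prove the two assertions separately.

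For the first assertion, that $\mathscr A(P_{(m,1,n)})$ is combinatorially equivalent to the permutohedron $\Pi_{m+n}$, I would give an explicit description of its face lattice. The key structural feature of $P_{(m,1,n)}$ is the unique middle element $y$, which is comparable to every other element. Using Galashin's description of faces of $\mathscr A(P)$ as certain tubings (or nested families) of a connected poset, I expect faces of $\mathscr A(P_{(m,1,n)})$ to be in a natural dimension-preserving bijection with ordered set partitions of the $m+n$ non-$y$ elements, since the comparability of $y$ with everything forces its relative position in any such face to be essentially canonical. Verifying that this bijection respects face inclusion then identifies the face lattice with that of $\Pi_{m+n}$.

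For the second assertion, that $\mathscr A(P_{(1,m,n)})$ is \emph{not} combinatorially equivalent to a permutohedron, Theorem~\ref{thm:main_thm} already guarantees that the $f$-vectors agree, so I must use a finer invariant. Every facet of $\Pi_{m+n}$ is a Cartesian product $\Pi_a \times \Pi_b$ with $a+b = m+n$, and in particular has exactly $a!\,b!$ vertices. My plan is to exhibit a facet of $\mathscr A(P_{(1,m,n)})$ whose vertex count, or whose own face lattice, cannot appear among the facets of $\Pi_{m+n}$. Natural candidates come from tubes grouping the unique bottom element with only part of the middle antichain, or grouping one middle element with only part of the top antichain; I would compute the face poset of such a facet directly from the poset associahedron construction and show it does not decompose as a product of two smaller permutohedra.

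The main obstacle is translating Galashin's abstract face description into concrete, checkable combinatorics for these particular posets, and in the second part picking the facet that most cleanly obstructs a permutohedral decomposition. The hypothesis $m,n\ge 2$ suggests that small boundary cases degenerate, so I would pay special attention to faces near the top and bottom antichains; I would also expect the argument to illuminate why the two posets differ so sharply despite having the same comparability graph, since the middle element of $P_{(m,1,n)}$ behaves as a ``cut vertex'' in the Hasse diagram, while the bottom element of $P_{(1,m,n)}$ does not separate the other layers in the same way.
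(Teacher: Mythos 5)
Your high-level strategy is sound, but there is a concrete gap in the second half, and the first half can be dispatched much more cheaply than you propose.

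For the first assertion, your observation that every tube of $P_{(m,1,n)}$ must contain the central element $y$ (since any two non-$y$ elements are connected only through $y$ in the Hasse diagram) is exactly right, and carrying this out does give the permutohedron's face lattice. However, the paper instead invokes the known fact that when the Hasse diagram of $P$ is a tree, $\mathscr A(P)$ is the graph associahedron of the line graph of that tree. The Hasse diagram of $P_{(m,1,n)}$ is the star $K_{1,m+n}$, whose line graph is $K_{m+n}$, and the graph associahedron of $K_{m+n}$ is $\Pi_{m+n}$. Your direct argument would reprove a special case of this from scratch; it works, but is more labor.

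For the second assertion, the key missing idea is which face to look at. Your candidate tubes (the bottom element together with part of the middle antichain, or one middle element together with part of the top antichain) are both stars in the Hasse diagram, hence their poset associahedra are themselves permutohedra by the same Observation \ref{obs:graph_ass}; the obstruction, if any, would then have to come from the quotient $P/{\sim}_\tau$, which is a messier $4$-level poset, and you would still need to know what rules out a permutohedral decomposition. The paper's choice is sharper: take a tube $\tau$ consisting of exactly two middle elements and two top elements, so $\tau \cong P_{(2,2)}$. Then $\mathscr A(P_{(2,2)})$ is a $2$-dimensional polytope with eight vertices, i.e.\ an octagon, and it appears as a factor of the corresponding facet, hence as a $2$-face of $\mathscr A(P_{(1,m,n)})$. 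Since every $2$-face of a permutohedron (indeed of any product of permutohedra) is a square or a hexagon, this immediately rules out combinatorial equivalence. Your vertex-count idea alone is unlikely to close the argument, since $8$ divides many factorials; passing to the $2$-skeleton is what makes the obstruction clean, and that is precisely where your proposal stops short.

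Also note that the hypothesis $m,n \ge 2$ is exactly what guarantees the existence of such a $P_{(2,2)}$ tube, which explains the boundary cases you flagged: for $m = 1$ or $n = 1$ the Hasse diagram of $P_{(1,m,n)}$ is again a tree and $\mathscr A(P_{(1,m,n)})$ really is a permutohedron.
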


\section{Background}

\subsection{Flips of autonomous subsets}

\begin{definition}
    Let $P \tand S$ be posets and let $a \in P$.  The \emph{substitution} of $a$ for $S$ is the poset $P(a \to S)$ on the set $(P-\{a\}) \sqcup S$ formed by replacing $a$ with $S$.  
    
    More formally, $x \preceq_{P(a \to S)} y$ if and only if one of the following holds:
    \begin{itemize}
        \item $x, y \in P-\{a\}$ and $x \preceq_P y$
        \item $x, y \in S$ and $x \preceq_S y$
        \item $x \in S, y \in P-\{a\}$ and $a \preceq_P y$
        \item $y \in S, x \in P-\{a\}$ and $y \preceq_P a$.
    \end{itemize}

\end{definition}

\begin{definition}
    \label{def:auto_subset}
Let $P$ be a poset and let $S \subseteq P$.  $S$ is called \emph{autonomous} if there exists a poset $Q$ and $a \in Q$ such that $P = Q(a \to S)$.  

Equivalently, $S$ is autonomous if  $\text{for all } x, y \in S \tand z \in P-S, \text{ we have }$ $$(x \preceq z \Leftrightarrow y \preceq z) \tand (z \preceq x \Leftrightarrow z \preceq y).$$ 
\end{definition}

    
    


\begin{definition}
    \label{def:flip}
    For a poset $S$, the \emph{dual poset} $S^{\op}$ is defined on the same ground set where $x \preceq_S y$ if and only if $y \preceq_{S^{\op}} x$. A \emph{flip} of $S$ in $P = Q(a \to S)$ is the replacement of $P$ by $Q(a \to S^{\op})$. 

\begin{figure}[h!]
    \centering
    \begin{subfigure}[t]{0.45\textwidth}
    \centering
        \includegraphics[scale = 1.2]{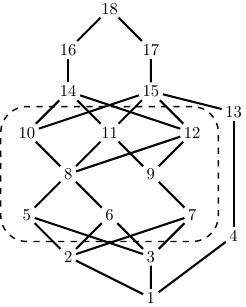}
    \caption{An autonomous subset $S$ of a poset $P$.}
    \label{fig:AutonomousExample}
        
    \end{subfigure}~
    \begin{subfigure}[t]{0.45\textwidth}
        \centering
        \includegraphics[scale = 1.2]{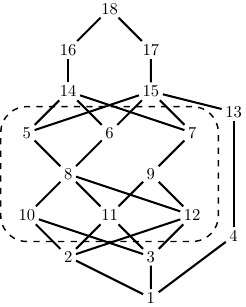}
        \caption{A flip of $S$.}
        \label{fig:FlipExample}
    \end{subfigure}
    \caption{}
\end{figure}

See Figure \ref{fig:AutonomousExample} for an example of an autonomous subset and Figure \ref{fig:FlipExample} for an example of a flip.

\end{definition}

\begin{lemma} [{\cite[Theorem 1]{dreesen1985comparability}}] 
\label{lem:dreesen_comparability}

If $P \tand P'$ are finite posets such that $C(P) = C(P')$ then $P \tand P'$ are connected by a sequence of flips of autonomous subsets. 
    
\end{lemma}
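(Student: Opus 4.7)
The plan is to induct on $|P|$. The key observation is that autonomous subsets of a poset are intrinsic to its comparability graph: $S \subseteq P$ is autonomous if and only if every vertex outside $S$ is either comparable to every element of $S$ or to no element of $S$. Since this condition depends only on $C(P)$, the collection of autonomous subsets of $P$ equals the collection of autonomous subsets of $P'$.

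I proceed by strong induction on $|P|$; the cases $|P| \le 2$ are immediate. For the inductive step, split on whether $C(P)$ is \emph{prime}, i.e., admits no non-trivial proper autonomous subset. In the prime case I would invoke Gallai's structure theorem for comparability graphs: a prime comparability graph on $\ge 2$ vertices admits exactly two transitive orientations, which are opposite to each other. Hence either $P = P'$, requiring no flips, or $P' = P^{\op}$, in which case a single flip of the (trivially autonomous) subset $S = P$ suffices.

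In the non-prime case, pick a maximal proper autonomous subset $S$ of $P$; by the first paragraph, $S$ is also autonomous in $P'$. Write $P = Q(a \to S)$ and $P' = Q'(a \to S')$, where $S'$ (resp.\ $Q'$) carries the order inherited from $P'$ via the same decomposition. Then $C(Q) = C(Q')$ and $C(S) = C(S')$ on strictly smaller ground sets, so the induction hypothesis yields sequences of flips transforming $S$ to $S'$ and $Q$ to $Q'$. Each such flip lifts to a flip in $P$: autonomous subsets of $S$ and of $Q \setminus \{a\}$ remain autonomous in $P$ after substitution, while a flip of an autonomous subset of $Q$ containing $a$ corresponds to flipping the set obtained by replacing $a$ with $S$, which is autonomous in $P$ by a short check. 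Concatenating the two lifted sequences produces the required chain from $P$ to $P'$.

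The main obstacle is Gallai's uniqueness theorem in the prime case, which is the deep ingredient. A self-contained proof of it would proceed by defining \emph{implication classes} on oriented edges of $C(P)$ and showing that in a prime graph all edges lie in a single implication class, so that fixing the orientation of any one edge determines the whole transitive orientation. The remaining verifications---particularly the lifting of flips through substitution---are routine structural bookkeeping.
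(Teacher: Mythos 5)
The paper does not prove this lemma---it is cited directly from Dreesen--Poguntke--Winkler---so I will assess your argument on its own terms. There is a genuine gap in the key observation on which the non-prime case rests. You claim that $S \subseteq P$ is autonomous if and only if every vertex outside $S$ is comparable to all of $S$ or to none of it (i.e., $S$ is a module of $C(P)$), and conclude that the collection of autonomous subsets is determined by $C(P)$. The forward implication is correct, but the converse is false: being a module of $C(P)$ does not force $S$ to be autonomous, because some $z \notin S$ may lie strictly between two elements of $S$. For instance, let $P$ be the chain $x \prec z \prec y$ and $P'$ the chain $z \prec x \prec y$, so $C(P) = C(P') = K_3$ and every subset is a module. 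Then $\{x,y\}$ is autonomous in $P'$ but not in $P$, while $\{z,y\}$ is autonomous in $P$ but not in $P'$. Consequently a maximal proper autonomous subset $S$ of $P$ need not be autonomous in $P'$ (e.g., with three or more series blocks, the blocks of $P'$ need not appear in an order compatible with your chosen $S$), so the inductive step ``write $P = Q(a\to S)$, $P' = Q'(a\to S')$'' breaks down.

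What is comparability-invariant, and what the cited proof actually exploits, is the modular decomposition of $C(P)$: the \emph{strong} modules (those overlapping no other module) are autonomous in every transitive orientation, so these are the correct pieces to recurse on. One must then follow Gallai's trichotomy---$C(P)$ disconnected (parallel), $\overline{C(P)}$ disconnected (series), or prime quotient with $\ge 4$ parts. Your prime case is fine, and the lifting of flips through substitution is routine as you say. But the series case requires an extra ingredient your sketch omits: the co-components of $C(P)$ appear as an ordinal sum in $P$ and in a possibly different linear order in $P'$, so one must additionally sort the blocks using flips of consecutive runs of blocks (an interval $P_i \oplus \cdots \oplus P_j$ is autonomous and flipping it reverses that run) before recursing into each block. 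Replacing ``autonomous subsets are a comparability invariant'' with ``strong modules are a comparability invariant and are autonomous,'' and adding the sorting step, would repair the argument.
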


In particular, a property is comparability invariant if and only if it is preserved under flips.

\subsection{Poset Associahedra}

We recall the definition of poset associahedra.

\begin{definition}
Let $P$ be a finite connected poset.  A subset $\tau \subsetneq P$ is called a \emph{proper tube} if
    \begin{itemize}
        \item $2 \le |\tau|$.
        \item $\tau$ is \emph{convex}, i.e. for all $x, z \in \tau, y \in S$, we have 
        $$(x \preceq y \preceq z) \Rightarrow (y \in \tau).$$
        \item $\tau$ is \emph{connected} as a subgraph of the Hasse diagram of $P$.
    \end{itemize}
A collection $T$ of proper tubes is called a \emph{proper tubing} if the following two conditions hold:
    \begin{itemize}
        \item The tubes in $T$ are pairwise either nested or disjoint.  That is, for all $\sigma, \tau \in T$, we have either $\sigma \subseteq \tau, \tau \subseteq \sigma, \tor \tau \cap \sigma = \emptyset.$
        \item The directed graph $D_T$ is acyclic, where $T$ is the vertex set of $D_T$ and where $(\sigma, \tau)$ is an edge if $\sigma \cap \tau = \emptyset$ and there exist $x \in \sigma, y \in \tau \st x \prec y$.
    \end{itemize}

\end{definition}

See Figure \ref{fig:ProperTubing} for an example of a proper tubing.

\begin{figure}[h!]
\captionsetup[subfigure]{justification=centering}

    \centering
    \begin{subfigure}{0.45\textwidth}
    \centering
        \includegraphics[scale = 1]{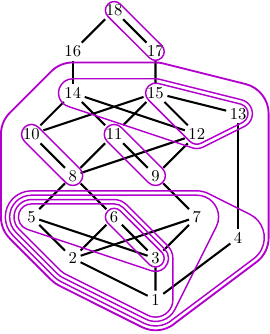}
    \caption{A proper tubing $T$ on $P$.}
    \label{fig:ProperTubing}
    \end{subfigure}~
    \begin{subfigure}{0.45\textwidth}
        \centering
        \includegraphics[scale = 1]{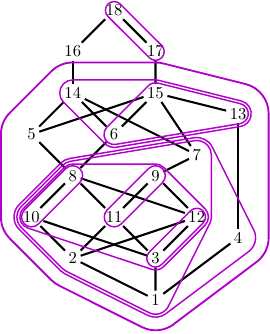}
        \caption{$\flipmap(T)$}
        \label{fig:TprimeFinal}
    \end{subfigure}
    \caption{}
    \label{fig:TubingAndImageUnderMap}
\end{figure}

\begin{theorem}[{\cite[Theorem 1.2]{galashin2021poset}}]
    Let $P$ be a finite, connected poset on at least 2 elements.  Then the collection of proper tubings of $P$ ordered by reverse inclusion is isomorphic to the face lattice of a simple $(|P|-2)$-dimensional polytope $\mathscr A(P)$.  We call this polytope a poset associahedron.
\end{theorem}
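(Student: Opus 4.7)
My plan is to exhibit a concrete polytope and show its face lattice matches the poset of proper tubings under reverse inclusion. Modeled after the Carr--Devadoss construction of graph associahedra, I would build $\mathscr A(P)$ by iterated truncation, and then verify the face-tubing correspondence together with dimension and simplicity.

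Specifically, I would begin with a $(|P|-2)$-dimensional base polytope whose facets carry a natural initial correspondence with a designated class of proper tubes. A clean choice is to work in $\mathbb R^{|P|}$ modulo a suitable affine subspace, introducing a halfspace $\sum_{i \in \tau} x_i \geq c_\tau$ for every proper tube $\tau$, where the constants $c_\tau$ are tuned by a submodular-like function on tubes. Then, processing proper tubes in order of decreasing size, I would truncate the face of the current polytope at each tube $\tau$, introducing a new facet $F_\tau$ in place of the intersection $\bigcap_{\sigma \supsetneq \tau} F_\sigma$. The combinatorics of tube containment ensures each truncation is well-defined, and the iterated construction keeps the polytope simple.

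Next, I would identify the face lattice with the tubing poset. For a set $T$ of proper tubes, I would prove by induction on $|T|$ that the intersection $\bigcap_{\tau \in T} F_\tau$ is non-empty precisely when $T$ is a proper tubing, and that in that case it is a face of codimension $|T|$. The nested-or-disjoint condition would correspond to compatibility of sequential truncations, while the acyclicity of $D_T$ must be forced by the geometry (see below). For dimension and simplicity: the base polytope has dimension $|P|-2$ and truncation preserves both dimension and simplicity, and a direct induction on $|P|$ shows that every maximal proper tubing has exactly $|P|-2$ tubes, matching the facet count at each vertex of a simple $(|P|-2)$-polytope.

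The principal obstacle is encoding the acyclicity of $D_T$ into the polytope. For graph associahedra, disjoint tubes are always geometrically compatible, but for poset associahedra a collection of pairwise disjoint tubes whose induced digraph contains a directed cycle — most simply, two disjoint tubes $\sigma, \tau$ with $x, x' \in \sigma$ and $y, y' \in \tau$ satisfying $x \prec y$ and $y' \prec x'$ — must be excluded from the face lattice. Ensuring that the truncations or the defining inequalities rule out precisely these cyclic configurations, without accidentally killing legitimate faces, is the central technical step distinguishing poset associahedra from any purely nested construction. I would attempt this via a carefully engineered choice of the constants $c_\tau$, possibly using a submodular function that becomes strictly submodular on cyclic disjoint pairs so that the corresponding halfspaces fail to meet; the inductive argument would bootstrap from chain sub-posets of $P$, where the construction reduces to the classical Stasheff associahedron.
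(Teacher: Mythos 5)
This statement is quoted background (Galashin's Theorem 1.2); the present paper gives no proof of it, so your attempt can only be judged against the known constructions. As written, your proposal is a plan rather than a proof: the step you yourself identify as ``the central technical step'' --- forcing the acyclicity of $D_T$ to be visible in the geometry --- is left unresolved, and it is precisely the point where poset associahedra depart from the Carr--Devadoss/graph-associahedron template you are following. Worse, the specific mechanism you propose is unlikely to work even in principle: halfspaces of the form $\sum_{i \in \tau} x_i \ge c_\tau$, with constants coming from a (strictly) submodular function on tubes, are completely insensitive to the \emph{orientation} of the order relations. Two disjoint tubes $\sigma, \tau$ with a $2$-cycle in $D_T$ and two disjoint tubes with no cycle give identical subset-sum data, so no tuning of the $c_\tau$ can kill the first configuration while keeping the second; iterated truncations of a deformed permutohedron of this kind record only the nested-or-disjoint combinatorics, which is exactly the graph-associahedron face lattice, not the poset-associahedron one.

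The known proofs fix this by starting from order-sensitive geometry. Galashin realizes $\mathscr A(P)$ by truncating faces of the order polytope $\mathscr O(P) = \{x \in [0,1]^P : x_i \le x_j \text{ whenever } i \preceq j\}$ (in a suitable order), where the defining inequalities $x_i \le x_j$ know the direction of each relation: a collection of pairwise disjoint tubes whose digraph has a directed cycle corresponds to collapsing each tube to a point in a way that would force a cyclic chain of inequalities, hence to an empty face, so acyclicity is enforced automatically rather than by hand; the realization in the paper's reference \cite{sack2023realization} likewise uses coordinates constrained by the order. If you want to salvage your approach, you should replace the base polytope and halfspace family with something of this kind (or otherwise introduce inequalities depending on the directions $x \prec y$ between tubes), and only then carry out the inductive identification of $\bigcap_{\tau \in T} F_\tau$ with proper tubings, the codimension count $|T|$, and simplicity. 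Until the acyclicity-encoding step is actually carried out, the proposal has a genuine gap at its core.
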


\begin{lemma}[{\cite[Corollary 2.7]
{galashin2021poset}}]
    The codimension of $T \in \mathscr A(P)$ is equal to $|T|$.
\end{lemma}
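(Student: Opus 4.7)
The plan is to leverage the previous theorem of Galashin, which establishes both the poset isomorphism between proper tubings under reverse inclusion and the face lattice of $\mathscr{A}(P)$, as well as the simplicity of $\mathscr{A}(P)$ as a $(|P|-2)$-dimensional polytope.

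First I would identify the atoms of the tubing poset. Under reverse inclusion, the empty tubing $\emptyset$ is the maximum element (corresponding to the full polytope $\mathscr{A}(P)$), and an atom is a minimal nonempty tubing. These are clearly the singleton tubings $\{\tau\}$ ranging over proper tubes $\tau$, since the only strict subset of such a tubing is $\emptyset$. Via the order-reversing isomorphism with the face lattice, atoms correspond to co-atoms, i.e.\ to the facets of $\mathscr{A}(P)$.

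Next I would invoke simplicity: in any simple $d$-polytope, every face $F$ is the intersection of exactly $\mathrm{codim}(F)$ facets, and this representation is unique. Translating back via the order-reversing isomorphism, intersection of faces corresponds to the join (in reverse-inclusion order) of tubings, which is union in the usual set-theoretic sense. Hence if $F$ corresponds to the tubing $T = \{\tau_1, \dots, \tau_k\}$, then $F$ is the intersection of precisely the $k$ facets corresponding to $\{\tau_1\}, \dots, \{\tau_k\}$, giving $\mathrm{codim}(F) = k = |T|$.

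The only real obstacle is the bookkeeping of the order reversal: one must carefully confirm that meets of facets in the face lattice correspond to unions of singleton tubings, and that the uniqueness clause of simplicity rules out the possibility that a tubing $T$ arises from a face cut out by fewer (or more) than $|T|$ facets. Both are immediate once the dualization of the lattice isomorphism is set up. With that in hand, the conclusion follows directly from the simplicity of the polytope.
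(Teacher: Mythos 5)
The paper does not prove this lemma; it is imported verbatim as Corollary~2.7 of Galashin's paper, so there is no ``paper's own proof'' to compare against. Your proposal is a correct reconstruction from the theorem stated just above it: once you know that proper tubings under reverse inclusion form the face lattice of a \emph{simple} $(|P|-2)$-polytope, the identity $\operatorname{codim}(F_T)=|T|$ follows because singleton tubings correspond to facets, $F_T=\bigcap_{\tau\in T}F_{\{\tau\}}$, and in a simple polytope a face of codimension $k$ lies in exactly $k$ facets.

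One small caution: you switch conventions midstream. Galashin's isomorphism is between tubings under \emph{reverse} inclusion and the face lattice, and it is order-\emph{preserving}. Under that order, the singleton tubings are \emph{coatoms} (covered by the top element $\emptyset$), mapping to the coatoms of the face lattice, i.e.\ facets. You describe the singletons as ``atoms'' and the isomorphism as ``order-reversing,'' which is what you would say if you ordered tubings by ordinary inclusion; that is internally consistent but contradicts your opening sentence. The argument survives either choice of convention once it is fixed, and the appeal to simplicity is the right key fact.
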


By an abuse of notation, we also use $\mathscr A(P)$ to refer to the set of proper tubings of $P$.  Our strategy for proving Theorem~\ref{thm:main_thm} is to give a bijection between the tubings of $Q(a \to S)$ and of $Q(a \to S^{\op})$ that preserves the number of tubes in a tubing.  See Figure \ref{fig:TubingAndImageUnderMap} for an example of the map.

\section{Proof of Theorem \ref{thm:main_thm}}

\subsection{Proof Sketch}

Let $P = Q(a \to S)$ and $P' = Q(a \to S^{\op})$. Our goal is to build a bijection $\flipmap: \mathscr A(P) \to \mathscr A(P')$ such that for any $T \in \mathscr A(P), |T| = |\flipmap(T)|$. Let $T \in \mathscr A(P)$.  We will describe how to construct $T' := \flipmap(T)$.

\begin{definition}
A tube $\tau \in T$ is \emph{good} if $\tau \subseteq P - S$, $\tau \subseteq S, \tor S \subseteq \tau$ and is \emph{bad} otherwise.  We denote the set of good tubes by $\Tgood$ and the set of bad tubes by $\Tbad$.  
    
\end{definition}





\begin{figure}[h!]
    \centering
    \includegraphics[scale = 1]{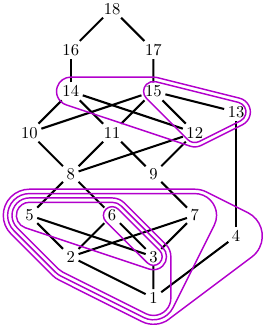}
    \includegraphics[scale = 1]{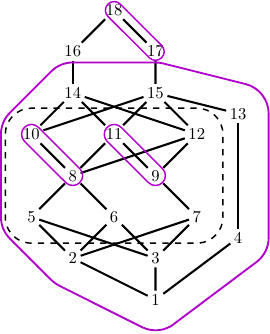}
    \includegraphics[scale = 1]{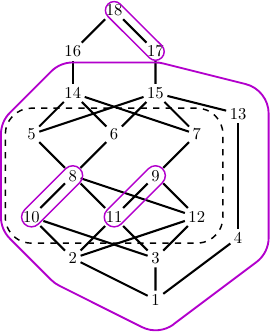}
    \caption{$\Tbad$ (left), $\Tgood$ (middle), and $\Tgood$ on $P'$ (right).}
    \label{fig:TGoodAndTBad}
\end{figure}

All good tubes are also good tubes in $P'$, and we add all good tubes to $T'$. See Figure \ref{fig:TGoodAndTBad} for an example of $\Tgood \tand \Tbad$.  It remains to handle the bad tubes.

\begin{definition}
        A sequence of sets $(A_1, \dots, A_r)$ is called \emph{nested} if $A_i \subseteq A_j$ for all $i \le j$.  
    A \emph{decorated} nested sequence is a nested sequence $(A_1, \dots, A_r)$ paired with a function
    $$f: \{1, \dots, r\} \to \{0, 1\}.$$ 
    For brevity, instead of specifying $f$, we will instead mark $A_i$ with a star if and only if $f(i) = 1$.
\end{definition}

The key idea of defining $\flipmap$ is to decompose $\Tbad$ into a triple $(\mathcal L, \mathcal M, \mathcal U)$ where $\mathcal L \tand \mathcal U$ are decorated nested sequences of sets contained in $P-S$ and $M$ is an ordered set partition of $S$.  We build  the decomposition in such a way so that we can recover $\Tbad$ from $(\mathcal L, \mathcal M, \mathcal U)$ and Figure \ref{fig:decomposition} for an example of the decomposition.

We build $\Tbad'$ by applying the recovery algorithm to the triple $(\mathcal L, \overline{\mathcal M}, \mathcal U)$ where $\overline{\mathcal M}$ is the reverse of $\mathcal M$.  We then add $\Tbad'$ to $T'$.  See Figure \ref{fig:flipmap} for an example of the recovery algorithm applied to $(\mathcal L, \overline{\mathcal M}, \mathcal U)$.  See Figure \ref{fig:TprimeFinal} for the image of $T$ under $\flipmap$ (including $\Tgood$).

\subsection{Proof details}

\begin{definition}
    A tube $\tau \in \Tbad$ is called \emph{lower} (resp. \emph{upper}) if there exist $x \in \tau-S$ and $y \in \tau \cap S \st x \preceq y$ (resp. $y \preceq x$).  We denote the set of lower tubes by $\TL$ and the set of upper tubes by $\TU$.
\end{definition}


\begin{lemma}[Structure Lemma]
    $\Tbad$ is the disjoint union of $\TL$ and $\TU$.  
    Furthermore, $\TL \tand \TU$ each form a nested sequence.
\end{lemma}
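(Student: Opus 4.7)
The plan is to exploit two ingredients throughout: the autonomy of $S$, which forces every element of $S$ to compare identically to any $z \in P - S$, and the two defining axioms of a proper tubing, namely that distinct tubes are either nested or disjoint, and that $D_T$ is acyclic.

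For the disjoint union claim, I will first show that every bad tube lies in $\TL \cup \TU$. A bad tube $\tau$ meets both $S$ and $P - S$, and since $\tau$ is connected as a subgraph of the Hasse diagram of $P$, some Hasse edge contained in $\tau$ must cross the partition $S \sqcup (P - S)$. That edge is a strict cover relation whose endpoints lie on opposite sides of $S$, and it therefore directly witnesses $\tau \in \TL$ or $\tau \in \TU$. To rule out a tube being in both, I will suppose that $\tau$ has a lower witness $x_1 \preceq y_1$ and an upper witness $y_2 \preceq x_2$ with $x_i \in \tau - S$ and $y_i \in \tau \cap S$. Autonomy promotes the first inequality to $x_1 \preceq y$ for every $y \in S$, and the second to $y \preceq x_2$ for every $y \in S$. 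Chaining yields $x_1 \preceq y \preceq x_2$ for all $y \in S$, so convexity of $\tau$ forces $S \subseteq \tau$, contradicting the hypothesis that $\tau$ is bad.

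For the nested sequence claim on $\TL$ (the argument for $\TU$ being dual), I will use that any two tubes in a proper tubing are nested or disjoint, so it suffices to rule out the existence of two disjoint lower tubes $\tau_1, \tau_2$. Given lower witnesses $x_i \in \tau_i - S$ and $y_i \in \tau_i \cap S$ with $x_i \preceq y_i$ for $i = 1, 2$, autonomy of $S$ yields $x_1 \preceq y_2$ and $x_2 \preceq y_1$. Both inequalities are strict, since in each the two sides lie on opposite parts of $S \sqcup (P - S)$ and are therefore distinct. These strict comparabilities produce the edges $(\tau_1, \tau_2)$ and $(\tau_2, \tau_1)$ in $D_T$, contradicting acyclicity.

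The main subtlety to watch is that the comparabilities obtained by invoking autonomy must remain strict, so that they actually yield edges of $D_T$ and that the convexity argument in the first part applies correctly; this is automatic from the membership constraints on the witnesses, since $x_i \notin S$ while $y_j \in S$. Beyond this bookkeeping, both statements follow directly from the definitions once autonomy has been applied.
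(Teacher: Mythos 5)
Your proof is correct and takes essentially the same approach as the paper: autonomy plus convexity to rule out a tube being both lower and upper, Hasse-diagram connectivity to see that a bad tube is lower or upper, and autonomy plus acyclicity of $D_T$ to show two lower (or two upper) tubes cannot be disjoint. The one small refinement you make — noting that the comparabilities produced by autonomy are automatically strict because $x_i \notin S$ while $y_j \in S$, and hence genuinely yield edges of $D_T$ — is implicit in the paper's argument, and your explicit Hasse-edge-crossing formulation of the connectivity step is just a slightly more concrete rendering of the paper's ``by connectivity there exist comparable $x \in \tau \cap S$ and $y \in \tau - S$.''
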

\begin{proof}
We first show that $\Tbad$ is the disjoint union of $\TL$ and $\TU$.
Suppose that $\tau \in \TL \cap \TU$, i.e. there exist $x_1, x_2 \in \tau - S \tand y_1, y_2 \in \tau \cap S$ such that $$x_1 \preceq y_1 \tand y_2 \preceq x_2.$$  Then as $S$ is autonomous, for all $y \in S, x_1 \preceq y \preceq x_2$.  As $\tau$ is convex, this implies $S \subseteq \tau$ and hence that $\tau$ is good.  Therefore $\TL \tand \TU$ are disjoint.  Next observe that if $\tau \in \Tbad$, by connectivity there exist $x \in \tau \cap S \tand y \in \tau-S$ such that $x \tand y$ are comparable.  Hence $\tau \in \TL \cup \TU$ so $\Tbad = \TL \sqcup \TU$.

Finally, we show that $\TL$ is nested.  The result on $\TU$ follows analogously.  It suffices to show that $\TL$ is pairwise nested.  Let $\sigma, \tau \in \TL$.  As $T$ is a tubing, if $\sigma \tand \tau$ are not nested, then they are disjoint.  Suppose, for the sake of contradiction, that $\sigma \cap \tau = \emptyset$, and let $x_1 \in \tau - S, x_2 \in \sigma-S, y_1 \in \tau \cap S, \tand y_2 \in \sigma \cap S \st x_1 \preceq y_1 \tand x_2 \preceq y_2$.  Then as $S$ is autonomous, $x_1, x_2 \preceq y_1, y_2$.  Thus $(\sigma, \tau) \tand (\tau, \sigma)$ are both edges in $D_T$, so $D_T $ is not acyclic, a contradiction.
\end{proof}

\begin{figure}
    \centering
    \begin{subfigure}[t]{0.33\textwidth}
    \centering
    \includegraphics[scale = 1.2]{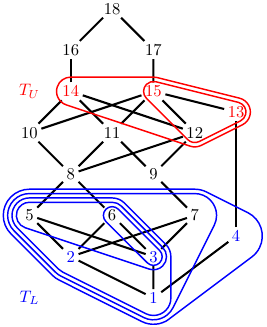}
    \caption{$T_L$ is blue and $T_U$ is red.    }
    \end{subfigure}~
    \begin{subfigure}[t]{0.66\textwidth}
    \centering
    \includegraphics[scale = 1.2]{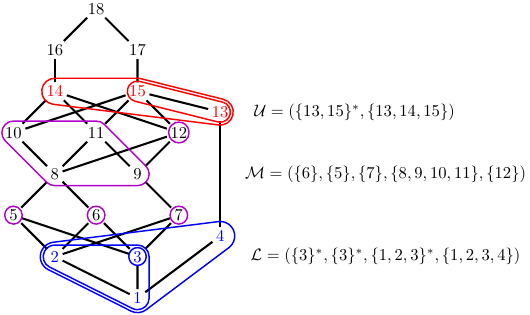}
    \caption{$\mathcal L$ is blue, $\mathcal M$ is purple, and $\mathcal U$ is red.}
    \end{subfigure}
    \caption{The decomposition of $\Tbad$.}
    \label{fig:decomposition}

\end{figure}

We decompose $\TL$ (resp. $\TU$) into a sequence of nested sets contained in $P-S$ and a sequence of disjoint sets contained in $S$ as follows.

\begin{definition}[Tubing decomposition]  
\label{def:tubing_decomp}

    Let $\TL = \{\tau_1, \dots, \}$ where $\tau_i \subset \tau_{i+1}$ for all $i$.  For convenience, we define $\tau_0 = \emptyset$. We define a decorated nested sequence $\mathcal L = (L_1, \dots)$ and a sequence of disjoint sets $\mathcal M_L = (M_{L}^1, \dots)$ as follows. 
    \begin{itemize}

    \item For each $i \ge 1$, let $L_i = \tau_i - S$, and mark $L_i$ with a star if $(\tau_i - \tau_{i-1}) \cap S \neq \emptyset$.  

\item If $L_{i}$ is the $j$-th starred set, let $M_{L}^j = (\tau_{i}-\tau_{i-1}) \cap S$.

\end{itemize}
We define the sequences $\mathcal U$ and $\mathcal M_U$ analogously.  
We make the following definitions.
\begin{itemize}
    
    \item Let $\hat M := S - \bigcup\limits_{\tau \in \Tbad} \tau.$

    \item For sequences $\mathbf{a} \tand \mathbf{b}$, let the sequence $\mathbf{a} \append \mathbf{b}$ be $\mathbf{b}$ appended to $\mathbf{a}$.
    \item For a sequence $\mathbf{a}$, let $\overline{\mathbf a}$ be the reverse of $\mathbf{a}$.

    \item We define $$\mathcal M :=  \begin{cases}
        \mathcal M_L \append \overline{\mathcal M}_U & \text{if } \hat M = \emptyset\\

        \mathcal M_L \append (\hat M) \append \overline{\mathcal M}_U & \text{if } \hat M \neq \emptyset 
        
    \end{cases}$$
    where $(\hat M)$ is the sequence containing $\hat M$.

\item The \emph{decomposition} of $\Tbad$ is the triple $(\mathcal L, \mathcal M, \mathcal U)$.
\end{itemize}

\end{definition}

See Figure \ref{fig:decomposition} for an example a decomposition.

\begin{lemma}[Reconstruction algorithm]
    $\Tbad$ can be reconstructed from its decomposition.
\end{lemma}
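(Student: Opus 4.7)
The plan is to write down an explicit inverse to the map $\Tbad \mapsto (\mathcal L, \mathcal M, \mathcal U)$ and verify by inspection that, applied to any decomposition produced by Definition~\ref{def:tubing_decomp}, it returns the original set of tubes. The argument is essentially bookkeeping: the content is that no information is lost when each bad tube $\tau$ is split between its $P-S$-part (recorded in $\mathcal L$ or $\mathcal U$) and its $S$-part (merged into $\mathcal M$).

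The first step is to recover the three subsequences $\mathcal M_L$, $(\hat M)$, and $\overline{\mathcal M}_U$ from $\mathcal M$. The key observation is that I can read off $|\mathcal M_L|$ and $|\mathcal M_U|$ directly from the decorations on $\mathcal L$ and $\mathcal U$: by construction each equals the number of starred entries of the corresponding sequence. Writing $\ell$ and $u$ for these counts, the total length of $\mathcal M$ is either $\ell + u$ or $\ell + u + 1$, and the excess determines whether $\hat M$ is present. This locates $\mathcal M_L$ as the initial $\ell$ entries of $\mathcal M$, $\hat M$ as the (possibly absent) middle entry, and $\mathcal M_U$ as the reverse of the final $u$ entries.

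The second step is to reconstruct $\TL$ from $\mathcal L$ and $\mathcal M_L$. For each index $i$, let $s(i)$ denote the number of starred entries among $L_1, \ldots, L_i$, and set
$$\tau_i := L_i \cup M_L^1 \cup \cdots \cup M_L^{s(i)}.$$
Unwinding Definition~\ref{def:tubing_decomp}, the original $\tau_i$ satisfies $\tau_i - S = L_i$, while $\tau_i \cap S = \bigcup_{k \le i}\bigl((\tau_k - \tau_{k-1}) \cap S\bigr)$, whose nonempty summands are indexed exactly by the starred positions up to $i$ and are exactly $M_L^1, \ldots, M_L^{s(i)}$. Hence the two expressions for $\tau_i$ agree. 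Reconstruct $\TU$ identically from $\mathcal U$ and $\mathcal M_U$, and output $\TL \sqcup \TU$, which equals $\Tbad$ by the Structure Lemma.

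There is no real obstacle here; the only subtle point is the handling of $\hat M$. If $\hat M$ were recorded even when empty, then an empty middle entry would be indistinguishable from a trailing empty entry of $\mathcal M_L$ or a leading empty entry of $\overline{\mathcal M}_U$. The convention of including $\hat M$ only when it is nonempty makes the length comparison above unambiguous, which is what ensures that the split of $\mathcal M$ is well defined and hence that the reconstruction is deterministic.
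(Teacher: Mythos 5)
Your proof is correct and follows essentially the same approach as the paper: you recover the indexing of $\mathcal M$ from the star counts on $\mathcal L$ and $\mathcal U$, and then reassemble each $\tau_i$ by gluing its $P-S$-part $L_i$ to the union of the $S$-parts it has absorbed so far. The only cosmetic difference is that you write the reconstruction in closed form, $\tau_i = L_i \cup M_L^1 \cup \cdots \cup M_L^{s(i)}$, whereas the paper states it as a recursion in $i$ and defers the verification to ``induction on $i$''; since $\mathcal L$ is nested the two are visibly the same formula. One small remark: your final paragraph's worry about distinguishing an empty $\hat M$ from a trailing empty entry of $\mathcal M_L$ (or a leading empty entry of $\overline{\mathcal M}_U$) never actually arises, because $M_L^j$ and $M_U^j$ are nonempty by construction (a star is placed only when $(\tau_i - \tau_{i-1}) \cap S \neq \emptyset$); the length bookkeeping you set up already resolves the split of $\mathcal M$ unambiguously either way, so the convention on $\hat M$ is a convenience rather than a necessity for well-definedness here.
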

\begin{proof}
    Let $\mathcal M = (M_1, \dots, M_n)$. To reconstruct $\TL$, we set $\tau_1 = L_1 \cup M_{1}$ and take
$$\tau_i =\begin{cases} \tau_{i-1} \cup L_i & \text{if $L_i$ is not starred}
\\ \tau_{i-1} \cup L_i \cup M_{j} & \text{if $L_i$ is marked with the $j$-th star.} \end{cases}$$

For $\TU$, we set $\tau_1 = U_1 \cup M_{n}$ and
$$\tau_i =\begin{cases} \tau_{i-1} \cup U_i & \text{if $U_i$ is not starred}
\\ \tau_{i-1} \cup U_i \cup M_{n-j+1} & \text{if $U_i$ is marked with the $j$-th star.} \end{cases}$$ 

In each case, the efficacy of the algorithm follows easily from induction on $i$.
\end{proof}

\begin{definition}[Flip map for tubings]

Let $T = \Tgood \sqcup \Tbad$. The \emph{flip map} $$\flipmap: \mathscr A(P) \to \mathscr A(P')$$ sends $T$ to a tubing $T' = \Tgood' \sqcup \Tbad'$ on $P'$ where $\Tgood = \Tgood'$ and $\Tbad'$ has the decomposition $(\mathcal L, \overline{\mathcal M}, \mathcal U)$.
\end{definition}

In Lemma \ref{lem:TbadIsATubing}, we show that applying the reconstruction algorithm to $(\mathcal L, \overline{\mathcal M}, \mathcal U)$ indeed yields a proper tubing $\Tbad'$ of $P'$.  In Lemma \ref{lem:WellDefined}, we show that $\Tgood \sqcup \Tbad'$ is a proper tubing on $P'$ and hence that $\flipmap$ is well-defined.

\begin{observation}
\label{obs:bijection}
By construction, the decomposition of $\Tbad'$ is $(\mathcal L, \overline{\mathcal M}, \mathcal U)$, so applying $\flipmapinverse$ returns $T$. In particular, $\flipmap$ is a bijection.    
\end{observation}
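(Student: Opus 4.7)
The plan is to verify the three claims of the observation in order: first that applying Definition \ref{def:tubing_decomp} to $\Tbad'$ yields $(\mathcal L, \overline{\mathcal M}, \mathcal U)$, then that $\flipmapinverse(T') = T$ follows, and finally that $\flipmap$ is therefore a bijection.

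For the main claim, I would first argue that the tubes of $\Tbad'$ built by the reconstruction algorithm from the $\mathcal L$-data are precisely the lower tubes of $\Tbad'$ in $P'$, and analogously that the $\mathcal U$-tubes are the upper tubes. This uses the autonomy of $S$: because the comparabilities between $P-S$ and $S$ are identical in $P$ and $P'$, any $x \in L_i$ satisfying $x \preceq_P y$ for some $y \in S$ also satisfies $x \preceq_{P'} y'$ for every $y' \in S$. In particular this holds for any $y'$ in the (nonempty, by construction) $S$-part of the reconstructed tube containing $L_i$, certifying it as a lower tube of $P'$; the case of $\mathcal U$ is symmetric.

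Next I would compute the decomposition of $\Tbad'$ explicitly and check that it equals $(\mathcal L, \overline{\mathcal M}, \mathcal U)$. By construction, the $(P-S)$-parts of the reconstructed tubes are the sets $L_i$ and $U_i$ with the same starring pattern, so $\mathcal L' = \mathcal L$ and $\mathcal U' = \mathcal U$. The $S$-increments attached to starred indices are read off $\overline{\mathcal M}$ from the front for $\TL'$ and from the back (in reverse) for $\TU'$. Since $\overline{\mathcal M} = \mathcal M_U \append (\hat M) \append \overline{\mathcal M}_L$, with the middle entry omitted when $\hat M = \emptyset$, this yields $\mathcal M_{L'} = \mathcal M_U$ and $\mathcal M_{U'} = \mathcal M_L$; moreover $\hat M' = \hat M$ because the $S$-elements uncovered by any bad tube of $T'$ are the same as those uncovered in $T$. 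Assembling these via Definition \ref{def:tubing_decomp} produces $\mathcal M' = \mathcal M_U \append (\hat M) \append \overline{\mathcal M}_L = \overline{\mathcal M}$, as required.

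Given that the decomposition of $\Tbad'$ is $(\mathcal L, \overline{\mathcal M}, \mathcal U)$, the definition of $\flipmapinverse$ reverses $\overline{\mathcal M}$ back to $\mathcal M$ and feeds $(\mathcal L, \mathcal M, \mathcal U)$ into the reconstruction algorithm, which by the Reconstruction Algorithm Lemma returns $\Tbad$. Combined with $\Tgood' = \Tgood$, this gives $\flipmapinverse(T') = T$, and swapping the roles of $P$ and $P'$ establishes $\flipmap \circ \flipmapinverse = \mathrm{id}$ as well. The main obstacle is the bookkeeping in the second step, where one must track how star indices in $\mathcal L$ and $\mathcal U$ pick up entries from opposite ends of $\overline{\mathcal M}$; the concatenation convention $\mathcal M = \mathcal M_L \append (\hat M) \append \overline{\mathcal M}_U$ is engineered so that reversal cleanly interchanges the $\mathcal M_L$ and $\mathcal M_U$ roles.
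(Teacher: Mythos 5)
Your overall plan is the detailed version of what the paper leaves implicit as ``by construction,'' and the first step (using autonomy of $S$ to certify that the reconstructed $\mathcal L$-tubes are lower tubes of $P'$) and the claim $\hat M' = \hat M$ are both correct. However, the intermediate bookkeeping claim $\mathcal M_{L'} = \mathcal M_U$ and $\mathcal M_{U'} = \mathcal M_L$ is false in general. The length of $\mathcal M_{L'}$ equals the number of starred entries of $\mathcal L$, i.e.\ $|\mathcal M_L|$, and there is no reason for $|\mathcal M_L|$ to equal $|\mathcal M_U|$. For instance, if $|\mathcal M_L| = 2$, $|\mathcal M_U| = 3$, and $\hat M = \emptyset$, then $\overline{\mathcal M} = \mathcal M_U \append \overline{\mathcal M}_L$ has length $5$, the $\mathcal L$-stars consume $(\overline{\mathcal M}_1, \overline{\mathcal M}_2)$ which is a proper prefix of $\mathcal M_U$, and the $\mathcal U$-stars consume $(\overline{\mathcal M}_5, \overline{\mathcal M}_4, \overline{\mathcal M}_3)$ which is longer than $\mathcal M_L$.

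The correct statement is weaker but suffices: by the reconstruction algorithm, $\mathcal M_{L'}$ is the prefix of $\overline{\mathcal M}$ of length $|\mathcal M_L|$, and $\overline{\mathcal M_{U'}}$ is the suffix of $\overline{\mathcal M}$ of length $|\mathcal M_U|$. Since $|\overline{\mathcal M}| = |\mathcal M_L| + |\mathcal M_U| + [\hat M \neq \emptyset]$, this prefix and suffix are disjoint, and either they partition $\overline{\mathcal M}$ (when $\hat M = \emptyset$), or they leave out exactly the middle entry $\overline{\mathcal M}_{|\mathcal M_L|+1} = \hat M$. Thus $\mathcal M' := \mathcal M_{L'} \append (\hat M') \append \overline{\mathcal M_{U'}} = \overline{\mathcal M}$ without needing $\mathcal M_{L'}$ to equal $\mathcal M_U$. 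The rest of your argument ($\mathcal L' = \mathcal L$, $\mathcal U' = \mathcal U$, and the final inversion step) then goes through.
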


\begin{figure}[h!]
    \centering
        \includegraphics[scale = 1.2]{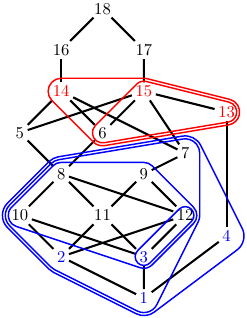}
    \includegraphics[scale = 1.2]{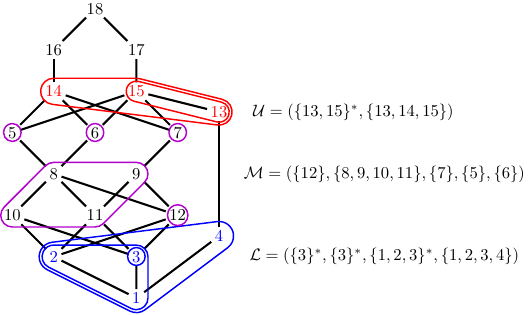}
    \caption{$\Tbad'$ and its decomposition.}
    \label{fig:flipmap}
\end{figure}


\begin{definition}
 Let $\mathbf{A} = (A_1, \dots, A_n)$ be a sequence of disjoint subsets of $P$.  We say $\mathbf{A}$ is \emph{weakly increasing} if for all $i < j$ we have $(x \in A_i \tand y \in A_j) \Rightarrow y \not\prec x.$

\end{definition}


\begin{lemma}
    $\mathcal M$ is weakly increasing.  
\end{lemma}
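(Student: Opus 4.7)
The plan is to fix $x \in M_i$ and $y \in M_j$ with $i < j$, assume for contradiction that $y \prec x$ (noting both lie in $S$), and argue by case analysis on which of the three blocks $\mathcal M_L$, $\hat M$, $\overline{\mathcal M}_U$ contains each of $x$ and $y$. As a preliminary fact I would show that every $\tau \in \TL$ is disjoint from every $\sigma \in \TU$: if instead, say, $\tau \subseteq \sigma$, the lower-tube witness $a \in \tau - S$ (satisfying $a \prec s$ for every $s \in S$ by autonomy) and the upper-tube witness $b \in \sigma - S$ (satisfying $s \prec b$ for every $s \in S$) both lie in $\sigma$, so convexity of $\sigma$ puts all of $S$ into $\sigma$, contradicting that $\sigma$ is bad. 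The symmetric inclusion $\sigma \subseteq \tau$ is handled identically. Together with $\hat M \subseteq S - \bigcup \Tbad$, this confirms that $\mathcal M$ is genuinely a sequence of pairwise disjoint subsets of $S$.

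The same-type cases (both $x, y$ in $\mathcal M_L$; both in $\overline{\mathcal M}_U$; or one in $\hat M$ paired with an $\mathcal M_L$- or $\overline{\mathcal M}_U$-neighbor) all succumb to a single convexity template. When $x$ sits in a lower tube $\tau_p$, I pick the witness $a \in \tau_p - S$, use autonomy to upgrade ``$a \prec $ some element of $S$'' to $a \prec y$, and sandwich $a \prec y \prec x$ with $a, x \in \tau_p$; convexity of $\tau_p$ then forces $y \in \tau_p$. This contradicts either $y \in \hat M$ (which is disjoint from every bad tube) or $y \in \tau_q - \tau_{q-1}$ with $\tau_p \subseteq \tau_{q-1}$ coming from the ordering within $\mathcal M_L$. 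The symmetric argument using an upper-tube witness $b$ in an upper tube containing $y$ handles the cases where $y$ lies in $\overline{\mathcal M}_U$, provided one remembers that in $\overline{\mathcal M}_U$ a later position corresponds to a smaller tube in $\TU$, so the relevant convexity forces $x$ into that smaller tube.

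The only remaining case, $x \in M_L^{a_1} \subseteq \tau_p \in \TL$ and $y \in M_U^{a_2} \subseteq \sigma_r \in \TU$, is where convexity alone fails and I invoke acyclicity of $D_T$. The preliminary disjointness gives $\tau_p \cap \sigma_r = \emptyset$, so edges of $D_T$ between them are available. The assumption $y \prec x$ with $y \in \sigma_r$ and $x \in \tau_p$ yields $(\sigma_r, \tau_p) \in D_T$; combining the witnesses $a \in \tau_p - S$ and $b \in \sigma_r - S$ via any $s \in S$ gives $a \prec s \prec b$ and hence $a \prec b$, which yields $(\tau_p, \sigma_r) \in D_T$. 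This $2$-cycle contradicts acyclicity and finishes the proof.

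The main obstacle is not any single case but the combinatorial bookkeeping: correctly identifying which witness (lower or upper) each tube provides, applying autonomy to promote a pointwise comparability $a \prec s$ to the uniform statement $a \prec s'$ for all $s' \in S$, and navigating the reversed indexing of $\overline{\mathcal M}_U$ so that the convexity inclusions go in the right direction.
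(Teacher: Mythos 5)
Your proof is correct and follows the same strategy as the paper: convexity of lower (resp.\ upper) tubes handles pairs within $\mathcal M_L$, within $\overline{\mathcal M_U}$, or involving $\hat M$, while acyclicity of $D_T$ handles the cross-case $x \in \mathcal M_L$, $y \in \overline{\mathcal M_U}$. Your preliminary observation that tubes in $\TL$ are disjoint (as subsets of $P$) from tubes in $\TU$ is a welcome explicit justification for the existence of the $D_T$ edges that the paper's proof leaves implicit.
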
 

\begin{proof}
    First we show that $\mathcal M_L$ is weakly increasing.  Indeed, suppose to the contrary that $1 \le i < j \le |\mathcal M_L|$ but that there exist $x \in M_i \tand y \in M_j \st y \prec x$.  As $i < j$, there exists a tube $\tau \in \TL \st x \in \tau$ but $y \notin \tau$.  Furthermore, as $\tau$ is a lower tube, there exists $z \in\tau - S \st z \preceq y$.  Then since $\tau$ is convex, $y \in \tau$, a contradiction.

    Next, we show that $\mathcal M_L \append \hat M$ is weakly increasing. Let $x \in M_i \tand y \in \hat M$ such that $1 \le i \le |\mathcal M_L|$. 
    Then there exists a tube $\tau \in T_L \st x \in T_L$.  Again, there exists $z \in\tau - S \st z \preceq y$.  Then by the same convexity argument, if $y \prec x$ we have $y \in \tau$, contradicting the definition of $\hat M$. Hence $\mathcal M_L \append \hat M$ is weakly increasing.

    By symmetry, we have that $\hat M \append \overline{\mathcal M_U}$ is weakly increasing.  It remains to show that for all $x \in \bigcup_{A \in \mathcal M_L} A \tand y \in \bigcup_{A \in \mathcal M_U} A$ we have $y \not\prec x.$

    Suppose to the contrary that there are such $x \tand y$.  Then there exist $\sigma \in T_L \tand \tau \in T_U$  with $x \in \sigma \tand y \in \tau$.  Furthermore, there exist $a \in \sigma \tand b \in \tau \st a \preceq x \tand y \preceq b.$  But then we have a cycle in $D_T$, a contradiction.

\end{proof}

\begin{lemma}
\label{lem:TbadIsATubing}
    $\Tbad'$ is a proper tubing on $P'$ such that $|\Tbad'| = |\Tbad|$.
\end{lemma}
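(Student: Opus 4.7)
My plan is to verify the three defining conditions for $\Tbad'$ to be a proper tubing on $P'$, then check the cardinality.

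\medskip

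\noindent\textbf{Cardinality.} I would first observe that the reconstruction algorithm produces exactly one tube per entry of $\mathcal L$ and per entry of $\mathcal U$, independent of which sequence is used in the middle. Hence
\[
|\Tbad'| \;=\; |\mathcal L| + |\mathcal U| \;=\; |\TL| + |\TU| \;=\; |\Tbad|
\]
by the Structure Lemma. This also yields the cardinality claim.

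\medskip

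\noindent\textbf{Each $\tau' \in \Tbad'$ is a proper tube of $P'$.} For size $\geq 2$, I would note that the smallest lower (resp.\ upper) tube of any bad tubing must, by definition, contain at least one element of $P-S$ and at least one of $S$; thus $L_1$ (resp.\ $U_1$) is starred, so $\tau_1'$ contains both a nonempty $L$-part and a nonempty $M$-part, and all larger $\tau_i'$ are bigger still. For convexity in $P'$, I would exploit the autonomy of $S^{\op}$ in $P'$: any $y$ squeezed between two elements of $\tau'$ is either in $P-S$, in which case convexity of $L_i = \tau_i \cap (P-S)$ inside $P-S$ (inherited from convexity of the original bad tube $\tau_i$) handles it, or in $S$, in which case autonomy reduces the question to convexity of $\tau' \cap S$ in $S^{\op}$. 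Convexity of subsets of $S$ is symmetric in $S$ and $S^{\op}$, so this follows from the fact that $\tau' \cap S$ is a union of $M$-parts forming an initial segment of $\overline{\mathcal M}$. For connectedness in Hasse$(P')$, I would argue inductively on the reconstruction: whenever we adjoin $L_i$ (possibly together with an $M$-part) to $\tau_{i-1}'$, autonomy provides covering relations between the new $P-S$ elements and the existing $S$-elements of the tube, keeping the Hasse subgraph connected, mirroring the connectedness of $\tau_i$ in the original $P$.

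\medskip

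\noindent\textbf{Tubing conditions.} Pairwise nesting within $\TL'$ or within $\TU'$ is built into the reconstruction. For a lower tube $\sigma' \in \TL'$ and an upper tube $\tau' \in \TU'$, their $(P-S)$-parts are $(L_i, U_j)$, which are disjoint because $\TL$ and $\TU$ consisted of disjoint tubes in $T$; their $S$-parts are disjoint because $\TL'$ only consumes $M$-parts from the prefix of $\overline{\mathcal M}$ (originally $\overline{\mathcal M_U}$) while $\TU'$ only consumes $M$-parts from the suffix (originally $\mathcal M_L$), and these are set-theoretically disjoint. Finally, for acyclicity of $D_{\Tbad'}$, I would apply the preceding weakly-increasing lemma: reversing $\mathcal M$ and simultaneously dualizing $S$ preserves the weakly-increasing property (since $y \not\prec_S x$ becomes $x \not\prec_{S^{\op}} y$), which forces all edges in $D_{\Tbad'}$ between a lower and an upper tube to be oriented consistently, precluding directed cycles.

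\medskip

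\noindent\textbf{Main obstacle.} The hard part is Step 2, specifically verifying convexity and connectedness of $\tau' \cap S$ inside $S^{\op}$. This is where the reversal $\mathcal M \mapsto \overline{\mathcal M}$ must harmonize with the duality $S \mapsto S^{\op}$; the weakly-increasing structure of $\mathcal M$ is precisely what makes this work, since it certifies that the initial segments of $\overline{\mathcal M}$ are down-closed in $S^{\op}$ in the relevant sense. Once this is in place, autonomy of $S^{\op}$ in $P'$ handles the mixed convexity between $P-S$ and the $S$-part, and the remaining tubing axioms follow by structural comparison with the original tubing $T$.
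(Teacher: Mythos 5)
Your plan is broadly aligned with the paper's: verify pairwise nesting/disjointness, compute the cardinality by counting entries of $\mathcal L$ and $\mathcal U$, and establish acyclicity of $D_{\Tbad'}$ by transporting the weakly-increasing lemma to $P'$ via reversal-plus-dualization. Your acyclicity paragraph captures essentially the same idea the paper uses (the paper argues that $\overline{\mathcal M}$ is weakly increasing in $P'$, hence $\bigcup \TL'$ and $\bigcup \TU'$ cannot form a $2$-cycle), and your treatment of the disjointness of $(P-S)$-parts and $S$-parts is consistent with the paper's appeal to ``by construction.''

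That said, there is a gap in your cardinality step. Writing $|\Tbad'| = |\mathcal L| + |\mathcal U|$ tacitly assumes that the reconstruction produces pairwise distinct tubes, but the sets $L_i$ can repeat (when $\tau_{i+1}-\tau_i \subseteq S$, one has $L_{i+1}=L_i$). The paper explicitly handles this: whenever $L_{i+1}=L_i$, the set $L_{i+1}$ must be starred, so a nonempty, previously unused block of $\overline{\mathcal M}$ is adjoined and $\tau_i' \subsetneq \tau_{i+1}'$. You need this strict-inclusion observation for the count to be valid.

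Your connectedness argument is also not adequate. The claim that ``autonomy provides covering relations between the new $P-S$ elements and the existing $S$-elements of the tube, keeping the Hasse subgraph connected'' does not hold up: covering relations crossing between $S$ and $P-S$ are \emph{not} preserved under the flip, since an $S$-element $s$ covered by $\ell \in P-S$ in $P$ must be \emph{maximal} in $S$, whereas in $P'$ the element of $S$ covered by $\ell$ must be \emph{minimal} in $S$. So you cannot ``mirror'' the connectedness of $\tau_i$ in $P$ edge-for-edge. The convexity argument fares better — the observation that $\tau' \cap S$ is an initial segment of $\overline{\mathcal M}$ combined with the weakly-increasing property does handle the $S$-internal case (and autonomy handles the mixed cases) — but this and the connectedness claim would need to be spelled out carefully rather than asserted. (For what it is worth, the paper's own proof of this lemma does not explicitly address convexity or connectedness of individual tubes either; it is terse here and leans on the product-decomposition induction in the subsequent lemma. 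So you are attempting to fill a genuine gap, but the filling is not yet watertight.)
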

\begin{proof}
    By construction, for all $\sigma, \tau \in \Tbad'$, $\sigma \tand \tau$ are nested or disjoint.  Furthermore, observe that in the construction of $\TL' = (\tau_1', \dots)$, if $L_i$ is empty then it is necessarily starred.  Thus for all $i$, we have $\tau_i' \subsetneq \tau_{i+1}'$.   Then $|\TL'| = |\mathcal L| =|\TL|$.  Similarly, $|\TU'| = |\TU|$. Hence $$|\Tbad'| =|\TL| + |\TU| = |\Tbad|.$$

    It remains to show that $D_{\Tbad'}$ is acyclic.  It suffices to show that $A := \bigcup_{\tau' \in \TL'} \tau'$ and that $B:= \bigcup_{\tau' \in \TU'} \tau'$ do not form a directed cycle.  Observe that as $\mathcal M$ is weakly increasing in $P$,  $\overline{\mathcal M}$ is weakly increasing in $P'$.
    Hence $(A, B)$ is weakly increasing, so $A \tand B$ do not form a directed cycle.
\end{proof}

\begin{lemma}
$\Tgood \sqcup \Tbad' $ is a proper tubing on $P'$.
\label{lem:WellDefined}
\end{lemma}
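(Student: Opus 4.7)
The plan is to verify the three defining properties of a proper tubing for $T' := \Tgood \sqcup \Tbad'$: each member is a proper tube of $P'$, tubes are pairwise nested or disjoint, and $D_{T'}$ is acyclic. The first property is essentially in hand: Lemma \ref{lem:TbadIsATubing} handles $\Tbad'$, and the autonomy of $S$ ensures each good tube remains convex and connected in $P'$. The remaining work is to control the interaction between $\Tgood$ and $\Tbad'$ for the other two properties.

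For pairwise nesting or disjointness, I would fix $\sigma \in \Tgood$ and $\tau' \in \Tbad'$, and let $\tau \in \Tbad$ be its partner under the index-wise bijection of Lemma \ref{lem:TbadIsATubing}, so that $\tau' - S = \tau - S$. If $\sigma \subseteq P - S$ then $\sigma \cap \tau' = \sigma \cap (\tau - S) = \sigma \cap \tau$, so the relation transfers from $T$. If $S \subseteq \sigma$, then $\tau$ and $\sigma$ must be nested in $P$, and $\tau \subseteq \sigma$ since $\tau \not\supseteq S$; hence $\tau' - S \subseteq \sigma$ and $\tau' \cap S \subseteq S \subseteq \sigma$, giving $\tau' \subseteq \sigma$. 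If $\sigma \subseteq S$, observe that for each $\rho \in \Tbad$ we have $\sigma \cap \rho \in \{\emptyset, \sigma\}$ (since $\rho \not\subseteq S$ rules out $\rho \subseteq \sigma$); taking the smallest $\rho$ containing $\sigma$ and inspecting the definition of $\mathcal M$ then shows that $\sigma$ lies inside a single member of $\mathcal M$. Because $\tau' \cap S$ is a union of whole members of $\overline{\mathcal M}$, the pair $\sigma, \tau'$ is nested or disjoint.

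For acyclicity of $D_{T'}$, suppose for contradiction there is a cycle. The plan is to combine three inputs: the acyclicity of $D_T$, the acyclicity of $D_{\Tbad'}$ from Lemma \ref{lem:TbadIsATubing}, and the weakly increasing property of $\overline{\mathcal M}$ in $P'$. Three observations control the cycle. First, good tubes $\sigma$ with $S \subseteq \sigma$ contain every tube of $\Tbad'$ by the previous step and so contribute no edges incident to $\Tbad'$. Second, edges between $\sigma \in \Tgood$ with $\sigma \subseteq P-S$ and any $\tau' \in \Tbad'$ agree with the edges between $\sigma$ and the partner $\tau \in \Tbad$ in $D_T$, because autonomy of $S$ makes all comparisons between $S$-elements and $P - S$ agree in $P$ and $P'$. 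Third, for $\sigma \in \Tgood$ with $\sigma \subseteq S$ and disjoint $\tau' \in \Tbad'$, the weak increase of $\overline{\mathcal M}$ forces any internal-$S$ witness to an edge to respect the position in $\overline{\mathcal M}$ of the unique $M$-block containing $\sigma$ relative to the $M$-blocks forming $\tau' \cap S$. Traversing the hypothetical cycle and applying these translations will produce either a cycle in $D_T$, a cycle in $D_{\Tbad'}$, or a violation of the weakly increasing property.

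The hardest step will be the third observation: cycles mixing good tubes inside $S$ with bad tubes of $\Tbad'$ via internal-$S$ comparisons cannot be lifted directly to $D_T$, because such comparisons reverse between $P$ and $P'$. The prefix/suffix form of $\tau' \cap S$ along $\overline{\mathcal M}$, together with the weak increase, is the key combinatorial constraint that forces these edges into a consistent order and rules out cycles.
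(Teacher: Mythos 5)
Your approach is a direct verification, which is genuinely different from the paper's. The paper instead uses the product structure of faces: it takes the minimal good tube $\tau$ containing $S$, quotients by the good tubes strictly inside $\tau$, observes that $\flipmap$ commutes with this quotient (acting as $\Phi_{T/{\sim}_\tau, S/{\sim}_\tau}$ on one factor), and then reduces by induction on $|P|$ to the case $\Tgood = \emptyset$, where Lemma~\ref{lem:TbadIsATubing} applies directly. That inductive reduction sidesteps essentially all of the case analysis you are attempting.

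Your nesting/disjointness argument is sound: the three cases $\sigma \subseteq P-S$, $S \subseteq \sigma$, and $\sigma \subseteq S$ are handled correctly, the observation that $\tau' - S = \tau - S$ is right, and the point that a good tube inside $S$ lies in a single block of $\mathcal M$ while $\tau' \cap S$ is a union of whole blocks (a prefix of $\overline{\mathcal M}$ if $\tau' \in \TL'$, a suffix if $\tau' \in \TU'$) is exactly the right structure. Your first two acyclicity observations are also correct: good tubes containing $S$ become irrelevant, and edges between a good tube in $P-S$ and a bad tube transfer bidirectionally to $D_T$ by autonomy.

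However, the acyclicity argument has a genuine gap, and you acknowledge it: the third observation is stated as a plan (``Traversing the hypothetical cycle \dots will produce \dots'') rather than carried out. Moreover, your three observations are not exhaustive. You never address edges between two good tubes both contained in $S$ --- these exist in $D_{T'}$, and their directions are \emph{reversed} relative to $D_T$ because the order on $S$ flips, so they do not simply transfer to $D_T$. A hypothetical cycle in $D_{T'}$ could weave together edges of all kinds: $S$-good to $S$-good (reversed), $S$-good to $(P-S)$-good (preserved by autonomy), $(P-S)$-good to bad (preserved), $S$-good to bad (genuinely new, constrained by weak increase of $\overline{\mathcal M}$), and bad to bad (handled by Lemma~\ref{lem:TbadIsATubing}). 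To finish your argument you would need to show that no alternating composite of these can close up --- for instance by constructing a global ``rank'' on the tubes of $T'$ compatible with $D_{T'}$, using the position of the block of $\mathcal M$ containing each $S$-good tube and the prefix/suffix structure of $\tau' \cap S$. As written, the crucial combination of the weak-increase constraint with the transferred and reversed edges is not established, so the proposal does not yet constitute a proof.
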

\begin{proof}
   This is most easily seen by observing how $\flipmap$ interacts with quotients of good tubes.
   Galashin~\cite[Corollary 2.7]{galashin2021poset} observes that faces of poset associahedra are products of poset associahedra.  In particular, given $T \in \mathscr A(P)$ and $\tau \in T \cup \{P\}$, we define an equivalence relation $\sim_\tau$ on $\tau$ by $i \sim_\tau j$ if there exists $\sigma \in T$ such that $i, j \in \sigma$ and $\sigma \subsetneq \tau$. Then the facet corresponding to $T$ is combinatorially equivalent to the product $\prod_{\tau \in T\cup\{P\}} \mathscr A(\tau/{\sim}_\tau)$.

   Let $\tau \in \Tgood \cup \{P\}$ be minimal such that $S \subseteq \tau$. One may verify that $\flipmap$ on any tubing containing $\Tgood$ is equivalent to applying $\Phi_{T/{\sim}_{\tau}, S/{\sim}_{\tau}}$ on the factor of $T/{\sim}_\tau$ in the product decomposition.  
   Then either $\Tgood = \emptyset$ and $\flipmap$ is well-defined by Lemma \ref{lem:TbadIsATubing} or $\flipmap$ is well-defined by induction on the size of $P$.
\end{proof}

We can finally prove Theorem~\ref{thm:main_thm}.  
\begin{proof}[Proof of Theorem~\ref{thm:main_thm}]

By Observation \ref{obs:bijection}, $\flipmap: \mathscr A(P) \to \mathscr A(P')$ is a bijection.  Furthermore, for any tubing $T \in \mathscr A(P)$, we have $$|\flipmap(T)| = |\Tbad| + |\Tgood| =|T|.$$ Hence the $f$-vectors of $\mathscr A(P) \tand \mathscr A(P')$ are equal.  By Lemma~\ref{lem:dreesen_comparability}, the $f$-vector of $\mathscr A(P)$ is a comparability invariant.
    
\end{proof}

\section{Proof of Theorem \ref{thm:combinatorially_equivalent}}

\begin{observation}[\cite{laplante2022diagonal, mantovani2023Poset}]
\label{obs:graph_ass}
If the Hasse diagram of $P$ is a tree, then $\mathscr A(P)$ is combinatorially equivalent to the graph associahedron~\cite{postnikov2008faces} of the line graph of the Hasse diagram of $P$.    
\end{observation}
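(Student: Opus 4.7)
The plan is to construct an explicit bijection between the proper tubings of $P$ and the proper tubings of the graph $L(H)$ (where $H$ denotes the Hasse diagram of $P$), and to show that this bijection preserves the number of tubes in each tubing, hence yielding an isomorphism of face lattices.

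I first establish that when $H$ is a tree, a subset $\tau \subseteq P$ is connected in $H$ if and only if it is convex and connected in $H$. The nontrivial direction uses uniqueness of paths in a tree: if $x, z \in \tau$ and $x \prec y \prec z$ in $P$, then the unique $H$-path from $x$ to $z$ is monotone and must pass through $y$; as a subtree of $H$ containing $x$ and $z$, the set $\tau$ contains the whole $H$-path between them. Consequently, the proper tubes of $P$ are precisely the subtrees of $H$ with between $2$ and $|P|-1$ vertices. I then set up the tube-level bijection $\tau \mapsto E(\tau)$, sending a proper tube of $P$ to the set of $H$-edges with both endpoints in $\tau$; this maps bijectively onto the non-empty proper connected subsets of $V(L(H)) \cong E(H)$, with inverse given by taking the union of endpoints. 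One checks that $\sigma \subseteq \tau$ in $P$ if and only if $E(\sigma) \subseteq E(\tau)$, and that $\sigma, \tau$ are disjoint in $P$ if and only if $E(\sigma), E(\tau)$ are disjoint with disconnected union in $L(H)$ (since disjoint subtrees share no endpoints). Hence pairwise nested-or-disjoint in $P$ matches pairwise compatibility in $L(H)$.

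The main obstacle is showing that Galashin's $D_T$-acyclicity condition is automatic when $H$ is a tree, so no further constraint is imposed. Suppose for contradiction that $\sigma_1 \to \sigma_2 \to \cdots \to \sigma_k \to \sigma_1$ is a $D_T$ cycle, and let $\sigma_*$ be a tube maximal under inclusion among the cycle tubes. Its cycle successor is disjoint from $\sigma_*$, so the monotone increasing $H$-path witnessing the outgoing $D_T$-edge from $\sigma_*$ (a \emph{bridge leg}) must exit $\sigma_*$ via an edge $e$ with one endpoint in $\sigma_*$ and the other outside. A short case check using the nested-or-disjoint structure shows that $e$ cannot lie in the interior of any cycle tube: if both endpoints of $e$ were in some $\sigma_\ell$, then the $\sigma_*$-endpoint forces $\sigma_*$ and $\sigma_\ell$ to be nested, with $\sigma_\ell \subsetneq \sigma_*$ placing the outside endpoint back inside $\sigma_*$ (contradiction), or $\sigma_* \subsetneq \sigma_\ell$ (contradicting maximality of $\sigma_*$). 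Next, concatenate the $k$ bridge legs with the $H$-paths inside each cycle tube connecting consecutive bridge-leg endpoints (\emph{intra-tube legs}) to form a closed walk in the tree $H$. By standard parity for closed walks in trees, every edge is traversed the same number of times in each direction; bridge legs use each $H$-edge only in its unique increasing direction, and intra-tube legs lie within cycle tubes. Therefore $e$ is used only by bridge legs, and only in one direction, forcing its total traversal count to zero---contradicting that the bridge out of $\sigma_*$ uses $e$.

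Combining these steps, the map $T \mapsto \{E(\tau) : \tau \in T\}$ sends proper tubings of $P$ bijectively to proper tubings of $L(H)$, preserving the number of tubes and hence the codimension of the corresponding face. This is an isomorphism of face lattices, so $\mathscr A(P)$ and the graph associahedron of $L(H)$ are combinatorially equivalent.
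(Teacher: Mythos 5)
Your proposal is correct, and it is worth noting that the paper itself gives no argument for this statement: it is recorded as an Observation with citations, so the paper delegates the proof entirely to the cited references, whereas you supply a self-contained one. Your route isolates exactly the two nontrivial points. First, when the Hasse diagram $H$ is a tree, convexity follows from $H$-connectivity (the unique $H$-path between comparable elements is a saturated chain, and it passes through any intermediate element), so proper tubes of $P$ are precisely the subtrees of $H$ with at least one edge; the map $\tau \mapsto E(\tau)$ then identifies them with proper connected subsets of vertices of $L(H)$, and nested (resp.\ disjoint) pairs of tubes of $P$ correspond to nested (resp.\ disjoint and non-adjacent) pairs in $L(H)$. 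Second --- the real content --- Galashin's acyclicity condition on $D_T$ imposes no extra constraint for tree Hasse diagrams: your argument (pick a cycle tube $\sigma_*$ maximal under inclusion, observe that the edge $e$ by which its bridge leg exits lies in no cycle tube, then use the fact that a closed walk in a tree traverses each edge equally often in both directions while the monotone bridge legs can traverse $e$ in only one direction and the intra-tube legs never touch it) is sound; without this step the tube-level bijection alone would not give a bijection of tubings. Two cosmetic remarks: in the case analysis for $e$ you should also allow $\sigma_\ell = \sigma_*$ (same contradiction as $\sigma_\ell \subseteq \sigma_*$), and in the final step the face-lattice isomorphism really comes from the fact that your bijection on tubings is induced by a tube-level bijection and therefore preserves inclusion of tubings in both directions; preservation of the number of tubes alone would not suffice, though here it comes for free.
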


\begin{proof}[Proof of Theorem \ref{thm:combinatorially_equivalent}]

By Observation \ref{obs:graph_ass}, for any $m, n \ge 1$, $\mathscr A(P_{m, 1, n})$ is combinatorially equivalent to the permutohedron $\Pi_{m+n}$.  

However, for $m, n \ge 2$, $\mathscr A(P_{1, m, n})$ has an octagon for a 2-dimensional face which permutohedra never do.  In particular, an octagon is a factor of the facet given by any tube isomorphic to $P_{2,2}$.
\end{proof}

\begin{figure}[h!]
    \centering
    \begin{tabular}{ccc}

        \includegraphics[height = 5cm]{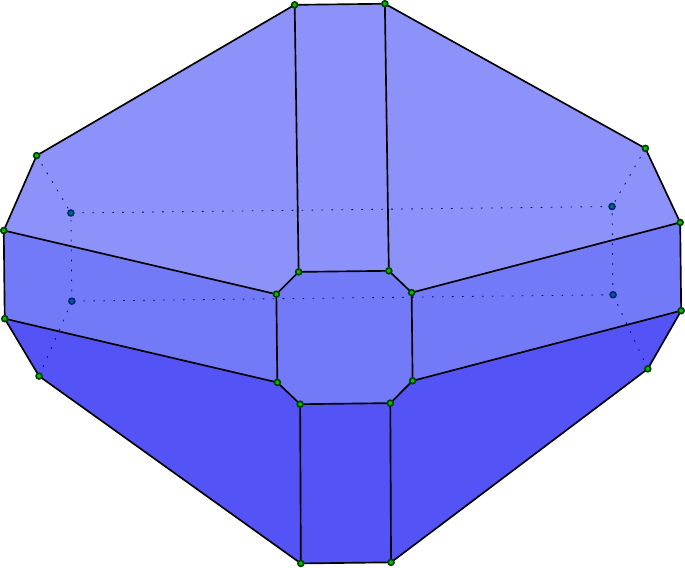} & \qquad\qquad & \includegraphics[height = 5cm]{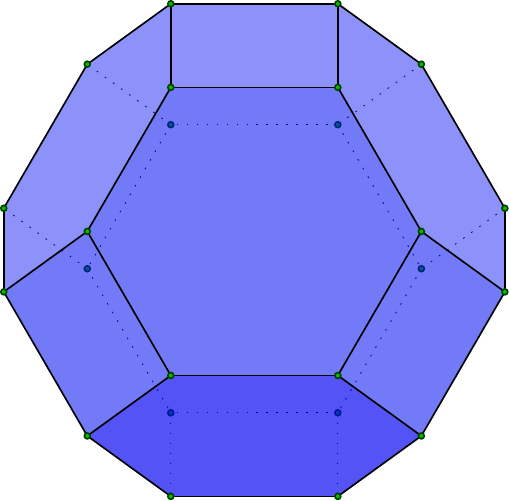} \\

                 $\mathscr A(P_{(1,2,2)})$ & \qquad\qquad & $\mathscr A(P_{(2,1,2)})$ 
    \end{tabular}
    \caption{$\mathscr A(P_{(1,2,2)})$ has an octagonal face, but $\mathscr A(P_{(2,1,2)})$
    does not.}
    \label{fig:counterexample}
\end{figure}

\section{Open questions}

\begin{question}
    In \cite{stanley1986two}, Stanley defines the \emph{order polytope} and the \emph{chain polytope}, with the latter defined purely in terms of the comparability graph.  He constructs a piecewise linear volume preserving map between the two polytopes which sends vertices to vertices.

    In particular, this shows that the number of vertices of the order polytope is a comparability invariant.  Can a similar geometric map be defined on the realization of poset associahedra in \cite{sack2023realization}?

\end{question}

\begin{question}
    More generally, can we define  $f_{\mathscr A(P)}(z)$ purely in terms of $C(P)$?  It would also be interesting to answer this question even for $f_0$.
\end{question}

\begin{question}
    It remains open to find an interpretation of $$h_{\mathscr A(P)}(z) := f_{\mathscr A(P)}(z-1)$$ in terms of the combinatorics of $P$.  Can $h(z)$ be defined purely in terms of $C(P)$?
\end{question}

\begin{question}
    The flip map can be analogously defined for \emph{affine poset cyclohedra}~\cite{galashin2021poset}, where an autonomous subset $S$ has at most one representative from each residue class.  Again, it preserves the $f$-vector of the affine poset cyclohedron.
    Does Lemma \ref{lem:dreesen_comparability} (and hence Theorem~\ref{thm:main_thm}) hold for affine posets?
\end{question}

\section*{Acknowledgements}
The authors are grateful to Vic Reiner for bringing Lemma \ref{lem:dreesen_comparability} to their attention and to Pavel Galashin for his helpful remarks.

\printbibliography

\end{document}